\documentclass[11pt]{article}
\usepackage{amssymb,latexsym,amsmath, amsthm}
\usepackage{amsfonts, graphics}
\arraycolsep = 0.3\arraycolsep
\sloppy

\sloppy
\newtheorem{theorem}{Theorem}[section]
\newtheorem{definition}[theorem]{Definition}

\newtheorem{lemma}[theorem]{Lemma}
\theoremstyle{remark}
\newtheorem*{remark}{Remark}

\begin{document}

\title{Strong Lagrangian solutions of the (relativistic)
  Vlasov-Poisson system for non-smooth, spherically symmetric data}

\author{Jacob K\"orner\\
  Institute of Mathematics\\
  Julius-Maximilians-Universit\"at W\"urzburg, Germany\\
  email: jacob.koerner@mathematik.uni-wuerzburg.de\\
  \ \\
  Gerhard Rein\\
  Department of Mathematics\\
  University of Bayreuth, Germany\\
  email: gerhard.rein@uni-bayreuth.de}

\maketitle

\begin{abstract}
  We prove a local existence and uniqueness result for the non-relativistic and
  relativistic Vlasov-Poisson system for data which need not even be continuous.
  The corresponding solutions preserve all the standard conserved quantities and
  are constant along their pointwise defined characteristic flow so that these
  solutions are suitable for the stability analysis of not necessarily smooth
  steady states. They satisfy the
  well-known continuation criterion and are global in the non-relativistic case.
  The only unwanted requirement on the data is that they be spherically
  symmetric.  
\end{abstract}

\maketitle
{\bf Key words.}
  Vlasov-Poisson system, existence and uniqueness,
  strong Lagrangian solutions

{\bf AMS subject classification.}
  35Q70, 35Q83, 85A05

\section{Introduction}
\setcounter{equation}{0}
The Vlasov-Poisson system
\begin{align} \label{vlasov}
  \partial_t f+ v \cdot \partial_x f - \partial_x U \cdot \partial_v  f = 0,
\end{align}
\begin{align} \label{poisson}
  \Delta U=4\pi \rho, \quad \lim_{|x| \to \infty} U(t,x)=0,
\end{align}
\begin{align} \label{rhodef}
  \rho(t,x)= \int_{{\mathbb R}^3} f(t,x,v)\, dv      
\end{align}
describes a large ensemble of particles which interact only by the
gravitational field which they create collectively. Here $f=f(t,x,v)\geq 0$
denotes the particle density on phase space,
$t\in {\mathbb R}$, $x\in {\mathbb R}^3$, and $v\in {\mathbb R}^3$ denote
time, position, and velocity, $\rho$ is the spatial mass density induced by $f$,
and $U$ is the gravitational potential induced by $\rho$.
This system is used in astrophysics for modeling galaxies or globular
clusters, cf.\ \cite{BT}.
If the Vlasov equation \eqref{vlasov} is replaced by
\begin{align}  \label{rvlasov}
  \partial_t f+ \frac{v}{\sqrt{1+|v|^2}} \cdot \partial_x f -
  \partial_x U \cdot \partial_v  f = 0,
  \end{align}
the relativistic Vlasov-Poisson system is obtained.
Here $v$ should be thought of
as momentum; $v/\sqrt{1+|v|^2}$ is then the induced velocity of a particle
of unit mass.

An important feature of these systems is that they posses a plethora of steady
states. One way to obtain steady states is to make an ansatz
\begin{align} \label{ansatz}
f(x,v) = \phi(E(x,v)),\ 
E(x,v):= U(x) + \left\{
\begin{array}{cl}
  \frac{1}{2} |v|^2 &\ \mbox{non-relativistic case},\\
  \sqrt{1+|v|^2} &\ \mbox{relativistic case},
\end{array}
\right.
\end{align}
with some ansatz function $\phi$; $E$ is the local or particle energy in
a stationary potential $U=U(x)$. This ansatz reduces the (relativistic)
Vlasov-Poisson system to a semilinear Poisson equation for $U$, namely
\eqref{poisson} where the right hand side depends on $U$ through the ansatz
\eqref{ansatz}. We refer to \cite{RR} and the references there for sufficient
conditions on $\phi$ such that this leads to physically viable steady states
with finite mass and compact support. The classical example are the polytropic
models where
\begin{align} \label{polytr}
\phi(E) = (E_0 -E)_+^k;
\end{align}
the subscript $+$ denotes the positive part.
Here $-1< k < 7/2$ and $E_0<0$ is a cut-off energy.
One can also take sums of such ansatz functions with different cut-off energies
and/or different exponents, and if one requires $U$ to be spherically symmetric,
the ansatz may also depend on the particle angular momentum $L:=|x\times v|$.
The important point for the present paper is that for these steady states $f$
need not be smooth and not even continuous.
If one wants to investigate the stability of such a steady state
a natural class of perturbations are the dynamically accessible ones, where the
stationary particle distribution is rearranged via a measure-preserving
homeomorphism of phase space caused for example by the action of some exterior,
perturbing force. The resulting, dynamically accessible data are then in general
as regular or irregular as the original steady state. For example, if we pick
$k=0$ in \eqref{polytr}, then the distribution function of both the steady state
and its perturbation attains only the values $0$ and $1$ and is discontinuous.

It is desirable to have an existence and uniqueness result for the
time dependent problem for such data, where the resulting solutions should
preserve all the conserved quantities like the total energy and the so-called
Casimir functionals, since these are used in the stability analysis.
In addition, the characteristic flow corresponding
to the Vlasov equation should exist and $f$ should be constant along this flow;
this property is more important in a stability analysis (and elsewhere)
than the Vlasov equation itself. We refer to
\cite{GuRe2001,GuRe2007,GuoLin,HR,LeMeRa2,LeMeRa3,Rein07}
and the references there for
stability results for the (relativistic) Vlasov-Poisson system.
In the present paper we provide a local existence and uniqueness result as
specified above. The characteristic flow of the corresponding Vlasov equation
will be defined pointwise on phase space, and $f$ will be constant along it;
we call such solutions
{\em strong Lagrangian solutions}. For the data we require that
$\mathring{f} = f_{|t=0}$
is a non-negative,
bounded, and measurable function with compact support, and in addition,
that $\mathring{f}$ is spherically symmetric; a function
$g\colon {\mathbb R}^3\times {\mathbb R}^3 \to {\mathbb R}$ is
{\em spherically symmetric}, if $g(Ax,Av)=g(x,v)$ for all
$x,v \in {\mathbb R}^3$ and
$A\in \mathrm{SO}(3)$. In passing we note that our result answers a
question which was left open in the stability analysis \cite{HR}.
The symmetry assumption is of course undesirable, and
it is an open problem, how far one can relax this assumption without loosing
any of the properties of the solution. It is also an open problem whether one
can preserve these properties for not necessarily bounded data, such as would
arise by perturbing polytropic steady states with $-1<k<0$.

There exists an extensive literature concerning the initial value problem
for the Vlasov-Poisson system, and a bit less for its relativistic version,
and to put the present paper into context we recall some of it.
For the non-relativistic version,
Batt \cite{B} proved local existence and uniqueness of smooth solutions
together with a continuation criterion, and he used the latter to obtain
global existence for smooth, spherically symmetric data. The latter result is
known to be false for the relativistic version, cf.~\cite{GS}.
For the non-relativistic version smooth solutions exist globally also for
non-symmetric data, as was shown by Pfaffelmoser \cite{Pf} and simultaneously
but independently by Lions and Perthame \cite{LP}, cf.\ also \cite{Rein07}.
Global weak solutions for the non-relativistic system, which are neither known
to be unique nor to preserve the usual conserved quantities, were obtained for
example in \cite{Ars, HoHu}. More recently, Lagrangian flows for non-smooth
vector fields have been investigated and used to construct Lagrangian
solutions of the Vlasov-Poisson system for $L^1$ data, cf.\ \cite{BBC}
and the references there. However, \cite{BBC} considers the non-relativistic,
repulsive case of the Vlasov-Poisson system where the sign of the right hand
side in the Poisson equation \eqref{poisson} is reversed. We do not know
whether these results can be extended to the attractive case stated above or
to the relativistic one. The relation
of the flow of ordinary differential equations with coefficients in Sobolev
spaces to linear transport equations like the Vlasov equation was studied in
the seminal paper \cite{DiPL}.
It should be emphasized that the results and techniques in \cite{BBC,DiPL}
are much more far reaching and sophisticated than the present investigation
and in particular do not rely on any symmetry assumption. Indeed, the main point
of the present investigation is to show that for the price of assuming
spherical symmetry, Lagrangian solutions with all the desired properties,
in particular, with a pointwise defined characteristic flow,
can be obtained by quite elementary methods for both the non-relativistic
and the relativistic Vlasov-Poisson system; in passing we note that all our
results hold equally well for the repulsive case mentioned above.

In the next section we state our results, and the proofs are given in
Section~\ref{proofs}.
The present paper is based on the first authors master thesis \cite{K}.
\section{Main results} \label{results}
\setcounter{equation}{0}
We start by making precise our solution concept; throughout
the paper integrals without an explicitly denoted domain of integration
extend over ${\mathbb R}^3$ or ${\mathbb R}^6$.
\begin{definition}\ \label{def_strongL}
  A measurable function
  $f\colon [0,T[\times {\mathbb R}^6 \to {\mathbb R}$ with $T>0$ is a
  {\em strong Lagrangian solution} of the non-relativistic or relativistic
  Vlasov-Poisson system iff:
  \begin{itemize}
  \item[(i)]
    The induced mass density
    \[
    \rho_f(t,x) =\rho(t,x):= \int f(t,x,v)\, dv
    \]
     and the induced gravitational field
     \[
     F_f(t,x) =F(t,x) :=\int \frac{x-y}{|x-y|^3} \rho (t,y) \, dy
     \]
     exist for all $(t,x) \in [0,T[ \times {\mathbb R}^3$, and $F$ is continuous
     and Lipschitz continuous in $x$, locally uniformly in $t$, i.e.,
     for every $0<T'<T$ there exists $L>0$ such that for all
     $t \in [0,T']$ and  $x, x'\in {\mathbb R}^3$,
     \[
     |F(t,x) - F(t,x')| \leq L\, |x-x'|.
     \]
   \item[(ii)]
     $f$ is constant
     along its characteristics, i.e., for all
     $(t,z)\in [0,T[ \times {\mathbb R}^6$,
     the mapping $s \mapsto f(s,Z(s,t,z))$ is constant, where
     $s\mapsto Z(s,t,z)=(X,V)(s,t,x,v)$ is the solution of the
     characteristic system
     \begin{align} \label{chars}
       \dot x = v \ \mbox{or}\ \dot x = \frac{v}{\sqrt{1+|v|^2}},\ \qquad
       \dot v = - F(s,x)
     \end{align}
     with $Z(t,t,z) = z = (x,v)$.
\end{itemize}  
\end{definition}
The gravitational field $F$ defined in part~(i) is the gradient
of the potential determined by \eqref{poisson}, and the conditions on $F$
guarantee the existence of the characteristic flow used in part~(ii) of the
definition, see also Lemma~\ref{lemma_char} below.
Formally, the definition can be
relaxed by replacing the assumptions on the field $F$ by the properties of the
induced flow, obtained in  Lemma~\ref{lemma_char}.
We also note that no symmetry
assumption enters in this definition.

For a measurable, bounded, and compactly supported state
$g \colon {\mathbb R}^6 \to [0,\infty[$ we define its kinetic
and potential energies as
\[
E_{\mathrm{kin}}(g):=\frac{1}{2} \iint |v|^2 g(x,v)\, dv\, dx
\]
or
\[
E_{\mathrm{kin}}(g):=\iint \sqrt{1+|v|^2} g(x,v)\, dv\, dx,
\]
\[
E_{\mathrm{pot}}(g):=-
\frac{1}{2} \iiiint \frac{g(x,v)\, g(y,w)}{|x-y|} dv\, dw\, dx\, dy ,
\]
and a Casimir functional is defined as
\[
\mathcal{C}(g) := \iint \Phi\big( g(x,v) \big)\, dv \, dx,
\]
where $\Phi\colon {\mathbb R} \to {\mathbb R}$ is continuous with $\Phi(0)=0$ 
\begin{theorem} \label{main}
  Let $\mathring{f} \colon {\mathbb R}^6 \to [0,\infty[$ be measurable, bounded,
  compactly supported and spherically symmetric. Then there exists
  a unique, spherically symmetric, strong Lagrangian solution
  $f \colon [0,T[\times {\mathbb R}^6 \to [0,\infty[$
  of the (relativistic) Vlasov-Poisson
  system with $f(0)=\mathring{f}$. If $T>0$ is chosen maximal, then in the
  non-relativistic case, $T=\infty$.
  In the relativistic case, $T=\infty$ if
  \[
  \sup \big\{|v| \mid (x,v) \in  \mathrm{supp}\, f(t),\
  0 \leq t <T \big\} < \infty.
  \]
  The energy and all Casimir functionals are conserved, i.e.,
  for all $t \in [0,T[$,
  \[
  E_{\mathrm{kin}}\big(f(t)\big) + E_{\mathrm{pot}}\big(f(t)\big) =
  E_{\mathrm{kin}}\big(\mathring{f}\,\big) +
  E_{\mathrm{pot}}\big(\mathring{f}\,\big), \quad
  \mathcal{C}\big(f(t)\big) = \mathcal{C}\big(\mathring{f}\,\big).
  \]
\end{theorem}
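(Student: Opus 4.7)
The plan is to construct $f$ as a limit of smooth solutions to regularized data, exploiting spherical symmetry to gain a uniform Lipschitz bound on the induced field. First, I would approximate $\mathring{f}$ by smooth, compactly supported, spherically symmetric non-negative functions $\mathring{f}_n$ with $\mathring{f}_n \to \mathring{f}$ almost everywhere and in $L^1$, satisfying $\|\mathring{f}_n\|_\infty \leq \|\mathring{f}\|_\infty$ and $\mathrm{supp}\,\mathring{f}_n$ in a fixed ball. Classical theory produces smooth spherically symmetric solutions $f_n$ with characteristic flows $Z_n$, globally in time in the non-relativistic case by \cite{B} and at least on a common interval $[0,T^{\ast})$ in the relativistic case. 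Spherical symmetry together with Newton's shell theorem give the explicit form
\[
F_n(t,x) = \frac{x}{|x|^3}\int_{|y|\leq |x|}\rho_n(t,y)\,dy,
\]
so that bounds on $\|\rho_n(t)\|_\infty$ and $\mathrm{supp}\,\rho_n(t)$ alone furnish a Lipschitz constant for $F_n(t,\cdot)$ that involves no derivative of $f_n$. This is the decisive gain from spherical symmetry, since for a general bounded $\rho$ the induced field is only log-Lipschitz.

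Second, I would prove the uniform-in-$n$ a priori estimates. The $L^\infty$ norm and total mass of $f_n$ are preserved by transport along $Z_n$; the nontrivial point is a locally uniform bound $P(t)$ on the momentum support of $f_n(t)$. In the non-relativistic case, the Batt-type effective-potential argument for spherically symmetric data yields such a bound globally; in the relativistic case, short-time control follows from mass and energy conservation, while the long-time statement is precisely the stated continuation criterion. Combined with the spatial support bound propagated by the characteristic equation, this gives uniform bounds on $\|\rho_n(t)\|_\infty$ and $\mathrm{supp}\,\rho_n(t)$, hence uniform Lipschitz constants for $F_n$ on every $[0,T']\subset[0,T)$.

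Third, I would pass to the limit. By Arzel\`a-Ascoli applied to the characteristic system with uniformly Lipschitz right-hand sides, a subsequence of the flows $Z_n$ converges uniformly on compact sets to a Lipschitz flow $Z$; since the phase-space vector field is divergence free, $Z$ preserves Lebesgue measure. I would then set $f(t,z) := \mathring{f}(Z(0,t,z))$, verify via dominated convergence that $\rho_f = \lim_n \rho_n$ and $F_f = \lim_n F_n$, and conclude that $Z$ is the characteristic flow of $F_f$ along which $f$ is constant by construction. Spherical symmetry is inherited in the limit, and the continuation statement follows because as long as $\mathrm{supp}_v f(t)$ remains bounded the field stays Lipschitz and the construction extends.

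Fourth, uniqueness follows from a Gronwall argument on the characteristics: for two strong Lagrangian solutions $f^1,f^2$ with the same data, the radial formula lets one bound $|F^1(s,x) - F^2(s,x)|$ in terms of the supremum difference of the associated flows at time $s$ via the induced mass densities, closing the ODE inequality for $Z^1 - Z^2$. Conservation of total mass and of all Casimir functionals is immediate from the measure-preserving, pointwise defined characteristic flow; energy conservation follows by a direct integration-by-parts computation using the potential. I expect the main obstacle to be step three, namely the pointwise identification of the limit flow with the characteristic flow of the limit field in the absence of any continuity of $f$; it is precisely the radial Lipschitz formula for $F$, valid for merely bounded $\rho$, that allows this elementary argument to close.
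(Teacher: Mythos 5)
Your route is genuinely different from the paper's: the paper never smooths the data, but fixes the rough $\mathring{f}$ and iterates on the field, proving a quantitative Cauchy estimate for the fields (no compactness argument), whereas you regularize $\mathring{f}$, invoke classical solutions, and extract a limit flow. That strategy could in principle work, but the step you yourself flag as the main obstacle is exactly where your argument has a genuine gap, and the tool you name does not close it. Dominated convergence does not give $\rho_f=\lim_n\rho_n$ or $F_f=\lim_n F_n$: you would need $\mathring{f}_n(Z_n(0,t,x,v))\to\mathring{f}(Z(0,t,x,v))$ for a.e.\ $v$, and for a merely measurable $\mathring{f}$ the composition with uniformly converging flows need not converge pointwise at all — for fixed $(t,x)$ the points $Z_n(0,t,x,v)$ sweep out a Lebesgue-null subset of ${\mathbb R}^6$, so the a.e.\ convergence $\mathring{f}_n\to\mathring{f}$ and any a.e.\ information about $\mathring{f}$ say nothing along them. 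The radial Lipschitz formula is irrelevant to this difficulty; it only guarantees that each field (and the limit field) is Lipschitz. What closes the identification is the measure-preserving property of the flows, which you never invoke for this purpose. Either pass to the cumulative mass, $G_n(t,r)=r^{-2}\int_{\{|X_n(t,0,z)|\le r\}}\mathring{f}(z)\,dz$, so that only sublevel sets of $|X_n(t,0,\cdot)|$ enter and the difference of two such integrals is supported in a thin shell whose measure is bounded by $C\,(r^3-(r-d)_+^3)$ with $d=\sup_z|X_n(t,0,z)-X(t,0,z)|$ — this shell estimate via measure preservation is the central computation of the paper — or argue in $L^1$: approximate $\mathring{f}$ by a continuous $g$ and use $|\det\partial_z Z_n|=1$ to get $\|\mathring{f}\circ Z_n(0,t,\cdot)-\mathring{f}\circ Z(0,t,\cdot)\|_1\le 2\|\mathring{f}-g\|_1+\|g\circ Z_n(0,t,\cdot)-g\circ Z(0,t,\cdot)\|_1$ (the extra term $\|\mathring{f}_n-\mathring{f}\|_1$ from smoothing the data is handled the same way), then deduce uniform convergence of the fields from $L^1$ convergence of the densities plus the uniform $L^\infty$ bound. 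Without one of these mechanisms, step three fails.

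The same omission undermines your uniqueness sketch: the densities of two strong Lagrangian solutions are not close in any pointwise sense, so "bounding $|F^1-F^2|$ via the induced mass densities" must mean comparing radial cumulative masses, and the needed estimate $\|F^1(t)-F^2(t)\|_\infty\le C\,\sup_z|X^1(t,0,z)-X^2(t,0,z)|$ again requires the shell/measure-preservation argument before Gronwall can be applied; this is the heart of the matter and cannot be left implicit. Two smaller points. In the relativistic case the short-time uniform momentum bound does not follow from mass and energy conservation (the kinetic energy only controls $\rho$ in $L^{4/3}$); it follows from the elementary bootstrap $P(t)\le\mathring{P}+C_{\mathring{f}}\int_0^t P(s)^2\,ds$ with $C_{\mathring{f}}$ depending only on $\|\mathring{f}\|_1$ and $\|\mathring{f}\|_\infty$, and it is precisely the fact that the local existence time obtained this way depends only on these norms and the momentum support radius that yields the continuation criterion. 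Finally, energy conservation for the non-smooth limit is not a "direct integration by parts" (the Vlasov equation is not available pointwise for merely measurable $f$); it is obtained either by passing to the limit in the approximations or, as in the paper, by differentiating along the measure-preserving flow and using the change-of-variables formula.
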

Some additional properties of the solution $f$ which come out of the proof
will be listed below.
\section{Proofs} \label{proofs}
\setcounter{equation}{0}
\subsection{The characteristic flow and the Vlasov equation}
We recall the relevant properties of the flow induced by \eqref{chars}.
\begin{lemma} \label{lemma_char}
  Let $F \colon [0,T[ \times \mathbb{R}^3 \to \mathbb{R}^3$
  be continuous and Lipschitz continuous
  with respect to $x$, locally uniformly in $t$. Then the following holds:
  \begin{itemize}
  \item[(a)]
    For every $t \in [0,T[$ and
    $z = (x,v) \in {\mathbb R}^3 \times {\mathbb R}^3$
    there exists a unique solution $[0,T[ \ni s \mapsto Z(s,t,z)$ of
    \eqref{chars} with $Z(t,t,z)=z$.
    The flow $Z$ is continuous on $[0,T[ \times [0,T[ \times {\mathbb R}^6$
    and Lipschitz continuous with respect to $z$, locally uniformly
    in $s$ and $t$.        
  \item[(b)] For every $s,t \in [0,T[$, the mapping
    $Z(s,t,\cdot): {\mathbb R}^6 \to {\mathbb R}^6$ is measure preserving, i.e.,
    \[
    |\det \partial_z Z(s,t,z)| = 1 \
    \mbox{for almost every}\ z\in {\mathbb R}^6,
    \]
    and one-to-one and onto
    with inverse $Z^{-1} (s,t,\cdot) = Z(t,s,\cdot)$.
  \item[(c)] For any measurable function
    $\Phi \colon {\mathbb R}^6 \to {\mathbb R}$,
    any measurable set $D \subset {\mathbb R}^6$, and any $s,t\in [0,T[$
    the change-of-variables formula holds:
    \[
    \int_{Z(s,t,D)} \Phi(z)\, dz = \int_D \Phi(Z(s,t,z))\, dz .
    \]
  \item[(d)]
    If $F$ in addition is spherically symmetric, i.e., $F(t,A x)=A\,F(t,x)$
    for all $t\in [0,T[$, $x\in {\mathbb R}^3$,
    and $A\in \mathrm{SO}(3)$, then so is
    $Z=(X,V)$,
    i.e., $(X,V)(s,t,A x,A v)=(A\,X,A\,V)(s,t,x,v)$ for all
    $s, t\in [0,T[$, $x, v\in {\mathbb R}^3$, and $A\in \mathrm{SO}(3)$.
  \end{itemize}
\end{lemma}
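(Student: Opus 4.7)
\textbf{Plan for Lemma \ref{lemma_char}.}

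For (a), the plan is to apply Picard--Lindelöf. The right-hand side of \eqref{chars} is continuous in $s$ and Lipschitz in $z=(x,v)$ locally uniformly in $s$: the position component is either $v$ (globally Lipschitz) or $v/\sqrt{1+|v|^2}$ (also globally Lipschitz with constant $1$), and the velocity component $-F(s,x)$ is Lipschitz in $x$ by hypothesis, locally uniformly in $s$. This yields local existence and uniqueness. To extend the solution to all of $[0,T[$, I would fix $T'<T$ and use the bound $|F(s,x)|\le |F(s,0)|+L(T')|x|$ (continuity in $s$ makes $\sup_{[0,T']}|F(s,0)|$ finite) together with Gronwall applied to $(|X|,|V|)$ to rule out blow-up. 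Joint continuity of $Z$ in $(s,t,z)$ and Lipschitz dependence on $z$, locally uniformly in $s,t$, follow from the usual Gronwall estimates on the integral form of the ODE.

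For (b), the key observation is that the characteristic vector field has zero phase-space divergence: $\mathrm{div}_{(x,v)}(v,-F)=0$ in the non-relativistic case and $\mathrm{div}_{(x,v)}(v/\sqrt{1+|v|^2},-F)=0$ in the relativistic case, since each component depends only on the other half of the variables. If $F$ were $C^1$ in $x$, classical Liouville would immediately give $\det \partial_z Z \equiv 1$. The main obstacle is that $F$ is only assumed Lipschitz in $x$, so the flow is merely Lipschitz in $z$, and a direct Liouville argument is unavailable. I would overcome this by regularization: mollify $F$ in $x$ to produce smooth $F_\varepsilon\to F$ locally uniformly, with uniformly bounded Lipschitz constants on every compact subinterval. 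The corresponding smooth flows $Z_\varepsilon$ are measure preserving by the classical Liouville theorem and converge to $Z$ locally uniformly by the continuous-dependence arguments from (a). Passing to the limit in $\int \psi(Z_\varepsilon(s,t,z))\,dz=\int \psi(z)\,dz$ for $\psi\in C_c({\mathbb R}^6)$ shows that $Z(s,t,\cdot)$ pushes Lebesgue measure forward to itself; Rademacher's theorem and the Lipschitz area formula then give $|\det\partial_z Z(s,t,z)|=1$ almost everywhere. The cocycle identity $Z(t,s,Z(s,t,z))=z$, immediate from uniqueness in (a), supplies bijectivity and the claimed formula for the inverse.

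For (c), the change-of-variables formula follows at once from the bi-Lipschitz property of $Z(s,t,\cdot)$ together with $|\det\partial_z Z|=1$ a.e.\ via the Lipschitz area formula; equivalently, the pushforward of Lebesgue measure under $Z(s,t,\cdot)$ is Lebesgue measure, so the identity holds first for $\Phi\in C_c$ and then for general measurable $\Phi$ by monotone-class arguments. For (d), given $A\in \mathrm{SO}(3)$, I would set $(\tilde X,\tilde V)(s):=(A^{-1}X,A^{-1}V)(s,t,Ax,Av)$ and verify directly that $(\tilde X,\tilde V)$ solves \eqref{chars} with data $(x,v)$ at time $t$: in the relativistic case one uses $|AV|=|V|$, and in both cases one uses the equivariance $F(s,A\xi)=A\,F(s,\xi)$ to commute $A$ past the force term. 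Uniqueness from (a) then yields the equivariance of $Z$. The only genuine obstacle is the regularity-limited Liouville step in (b); the remaining assertions are routine consequences of standard ODE theory.
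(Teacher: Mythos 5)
Your proposal is correct and follows essentially the same route as the paper: standard Picard--Lindel\"of/Gronwall arguments for (a), mollification of $F$ so that the smooth flows satisfy the classical Liouville theorem, uniform convergence $Z_\varepsilon \to Z$, and the Lipschitz change-of-variables (area) formula to conclude $|\det \partial_z Z|=1$ a.e.\ and part (c), with (d) by uniqueness. The only cosmetic difference is that you first identify the pushforward of Lebesgue measure and then extract the Jacobian, whereas the paper tests $|\det\partial_z Z|$ directly against $\phi\in C_c^\infty$; the ingredients are identical.
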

\begin{proof}
  Most of parts (a) and (b) is standard ODE theory. The fact that for
  $s$ and $t$ fixed, $Z(s,t,\cdot)$ is Lipschitz implies that the
  derivative $\partial_z Z(s,t,z)$ exists for almost all $z$;
  the exceptional set of measure zero may depend on $s$ and $t$, but
  this causes no problems.
  To prove the assertion on the functional determinant,
  let $J \in C_c^\infty ({\mathbb R}^3)$ be a smooth, compactly
  supported function with
  $\int J= 1$, i.e., a Friedrichs mollifier.
  For $\epsilon> 0,$ we define 
  $J_\epsilon := \epsilon^{-3} J(\cdot/\epsilon)$ and the smoothed field
  $F_\epsilon(t) := J_\epsilon * F(t)$ where $F(t)=F(t,\cdot)$ for $t \in [0,T[$.
  The corresponding flow $Z_\epsilon$ is differentiable with respect to $z$ with
  \[
  \det \partial_z Z_\epsilon(s,t,z)=1,\ s, t \in [0,T[,\ z\in {\mathbb R}^6,
  \]
  since the vector field generating this flow is divergence free on
  ${\mathbb R}^3\times {\mathbb R}^3$,
  cf.\ for example \cite[Lemma~1.2]{Rein07}. Moreover,    
  $Z_\epsilon(s,t,z) \to Z(s,t,z)$ for $\epsilon \to 0$,
  uniformly in $z$ and locally
  uniformly in $s$ and $t$. Now let
  $\phi \in C_c^\infty ({\mathbb R}^6)$ denote any test
  function. Then the change-of-variables formula for Lipschitz continuous
  transformations---cf.\ \cite[*263F Corollary]{F}---and the above convergence
  imply that
  \begin{align*}
    \int \phi (z)  |\det\partial_z Z(s,t,z)|\, dz 
    &= \int \phi (Z(t,s,z))\, dz
    = \lim_{\epsilon \to 0 } \int \phi (Z_\epsilon(t,s,z))\,dz \\ 
    &=\lim_{\epsilon \to 0 } \int \phi (z) |\det\partial_z Z_\epsilon (s,t,z)|\, dz 
    = \int \phi (z)\, dz.
  \end{align*}
  Hence, $|\det \partial_z Z(s,t,z)| =1$ for almost every $z$. Combining this
  again with \cite[*263F Corollary]{F} yields part (c). Part (d) follows by
  uniqueness.
\end{proof}

Given a field $F$ as specified in the previous lemma and initial data we can
solve the corresponding Vlasov equation.
\begin{lemma} \label{lemma_vlasov} 
  Let $F$ be as in Lemma~\ref{lemma_char}, $Z$ the flow obtained there, and
  let $\mathring{f} \colon {\mathbb R}^6\to {\mathbb R}$ be measurable, bounded,
  and compactly supported,
  and define $f(t,z) :=  \mathring{f} (Z(0,t,z))$ for all $t \in [0,T[$
  and $z\in {\mathbb R}^6$.
  Then the following holds: 
  \begin{itemize}
  \item[(a)]
    $f$  is constant along  solutions of \eqref{chars}, and
    $f(0)=  \mathring{f} $. 
  \item[(b)]
    For every $t \in [0,T[$ and $p \in [1,\infty]$,
    $\mathrm{supp}\, f(t) =Z(t,0,\mathrm{supp}\, \mathring{f})$, and
    $\Vert f(t)\Vert_p = \Vert\mathring{f}\Vert_p$; here $\Vert\cdot\Vert_p$
    is the $L^p$ norm on ${\mathbb R}^6$, and
    $f(t)=f(t,\cdot)$.
  \item[(c)]
    $f \in C\big([0,T[; L^1({\mathbb R}^6)\big)$.
  \item[(d)]
    If $F$ and $\mathring{f}$ are spherically symmetric,
    then so is $f(t)$ for every $t \in [0,T[$.
\end{itemize}
\end{lemma}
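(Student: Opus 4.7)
Part (a) is essentially the cocycle property of the flow: for any $s \in [0,T[$ and any initial data $(t,z)$, uniqueness in Lemma~\ref{lemma_char}(a) gives $Z(0,s,Z(s,t,z)) = Z(0,t,z)$, so $f(s,Z(s,t,z)) = \mathring{f}(Z(0,t,z))$ is independent of $s$. At $s=t=0$, $Z(0,0,z)=z$ gives $f(0)=\mathring{f}$.

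For part (b), since $Z(0,t,\cdot)$ is a homeomorphism with inverse $Z(t,0,\cdot)$ by Lemma~\ref{lemma_char}(b), the zero set of $f(t,\cdot)$ is the preimage under $Z(0,t,\cdot)$ of the zero set of $\mathring{f}$, hence $\mathrm{supp}\,f(t)=Z(t,0,\mathrm{supp}\,\mathring{f})$. The $L^p$ identity for $p<\infty$ follows by applying the change-of-variables formula in Lemma~\ref{lemma_char}(c) to $\Phi=|\mathring{f}|^p$ and $D=\mathbb{R}^6$ with the measure-preserving map $Z(0,t,\cdot)$; the $p=\infty$ case follows from the same fact that $Z(0,t,\cdot)$ preserves null sets.

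Part (c) is the main technical step. The plan is an approximation argument: pick $\mathring{f}_n \in C_c(\mathbb{R}^6)$ with $\mathring{f}_n \to \mathring{f}$ in $L^1(\mathbb{R}^6)$, set $f_n(t,z):=\mathring{f}_n(Z(0,t,z))$, and bound
\[
\|f(t)-f(t_0)\|_1 \le \|f(t)-f_n(t)\|_1 + \|f_n(t)-f_n(t_0)\|_1 + \|f_n(t_0)-f(t_0)\|_1.
\]
By part (b) applied to $\mathring{f}-\mathring{f}_n$ (whose construction there does not use non-negativity), the first and third terms equal $\|\mathring{f}-\mathring{f}_n\|_1$, independent of $t$, so they can be made small uniformly. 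For the middle term, $z\mapsto \mathring{f}_n(Z(0,t,z))$ converges pointwise to $\mathring{f}_n(Z(0,t_0,z))$ as $t\to t_0$ by continuity of $Z$ and of $\mathring{f}_n$. To apply dominated convergence I need a joint integrable majorant: using part (b) the supports of the $f_n(t,\cdot)$ for $t$ in a small neighborhood of $t_0$ are contained in the compact set $Z([t_0-\delta,t_0+\delta]\times\{0\}\times\mathrm{supp}\,\mathring{f}_n)$, which is bounded by local uniform continuity of $Z$; the functions themselves are bounded by $\|\mathring{f}_n\|_\infty$, giving a uniform compactly supported integrable majorant. This yields $\|f_n(t)-f_n(t_0)\|_1 \to 0$, and continuity of $f$ follows.

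Part (d) is immediate from uniqueness: by Lemma~\ref{lemma_char}(d), $Z(0,t,Ax,Av)=(AX,AV)(0,t,x,v)$, so
\[
f(t,Ax,Av)=\mathring{f}(Z(0,t,Ax,Av))=\mathring{f}(AX(0,t,x,v),AV(0,t,x,v))=\mathring{f}(Z(0,t,x,v))=f(t,x,v)
\]
using spherical symmetry of $\mathring{f}$. The main obstacle is really only the $L^1$-continuity in part (c), since once the approximation scheme is in place the measure-preserving property from Lemma~\ref{lemma_char} does all the work.
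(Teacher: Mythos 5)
Your proposal is correct and follows essentially the same route as the paper: parts (a), (b), (d) by the flow's uniqueness, bijectivity and measure-preserving/change-of-variables properties from Lemma~\ref{lemma_char}, and part (c) by approximating $\mathring{f}$ in $L^1$ by continuous compactly supported functions, using measure preservation to make the approximation error uniform in $t$, and then passing to the limit for the continuous approximant (the paper uses a single $\epsilon$-approximant and continuity on a large ball where you invoke dominated convergence — the same argument in substance). The only point the paper makes explicit that you leave implicit is the measurability of $f(t,\cdot)$, which follows since $Z(0,t,\cdot)$ is bi-Lipschitz and hence pulls back Lebesgue measurable sets to Lebesgue measurable sets.
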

\begin{proof}
  With the possible exception of part (c) all of this is quite obvious by
  the definition of $f$ and Lemma~\ref{lemma_char}. As to part (c),
  we first notice that $f(t)$ is measurable, since $\mathring{f}$
  is measurable and
  $Z(t,0,\cdot)$ is one-to-one and Lipschitz. Now let
  $\epsilon >0$ be arbitrary and choose
  $g \in C_c^\infty ({\mathbb R}^6)$ such that
  $\Vert\mathring{f}  - g\Vert_1 <\epsilon$.
  Then for any $t,t' \in [0,T[$,
  \begin{align*}
    \Vert f(t) - f(t') \Vert_1
    &\leq \int \vert  \mathring{f}  (Z(0,t,z)) - g(Z(0,t,z)) \vert\, dz\\
    & \quad {}+ \int \vert  \mathring{f}  (Z(0,t',z)) - g(Z(0,t',z)) \vert\, dz
    \\
    & \quad {}+ \int \vert  g (Z(0,t,z)) - g(Z(0,t',z)) \vert \, dz
    \\
    & \leq 2 \epsilon + \int_{B_R} \vert g (Z(0,t,z)) - g(Z(0,t',z)) \vert\, dz,
  \end{align*}
  where $B_R\subset {\mathbb R}^6$
  is a sufficiently large ball about the origin,
  and the assertion follows by continuity of $g$ and $Z$.
\end{proof}
\subsection{Local existence}
In this section we prove the local existence part of Theorem~\ref{main}.
To this end we consider the following iteration scheme which is essentially
the same as in \cite[Thm.~1.1]{Rein07}.

We define the $0$th iterate of the field as $F_0(t,x)=0$ for all
$t\in [0,\infty[$ and $x\in {\mathbb R}^3$.
Assume that for some $n\in {\mathbb N}_0$
a field $F_n\colon [0,\infty[ \times {\mathbb R}^3 \to {\mathbb R}^3$
is already defined
which has the following properties.
    
\noindent
{\em Field properties:} $F_n$ is continuous in $t$ and $x$,
Lipschitz continuous in $x$, locally uniformly in $t$,
bounded on $[0,T']\times {\mathbb R}^3$ for any $T'>0$,    
and spherically symmetric.

The field $F_0$ clearly has these properties. Lemma~\ref{lemma_char} yields
a corresponding flow $Z_n$, and Lemma~\ref{lemma_vlasov} yields the $n$-th
iterate $f_n$. We complete one iteration step by defining $\rho_n:= \rho_{f_n}$
and $F_{n+1}:=F_{f_n}$, cf.\ Definition~\ref{def_strongL}~(i).
Local existence now follows in three steps.

\noindent
{\em Step 1.}
In this step we prove that the iteration is well defined. Let
\[
P_n (t) := \sup \bigl\{|V_{n}(s,0,z)| \mid z \in \mathrm{supp}\,\mathring{f},\
0 \leq s \leq t  \bigr\},
\]
and pick $\mathring R, \mathring P >0$ such that
$\mathrm{supp}\,  \mathring{f}  \subset B_{\mathring R} \times B_{\mathring P}$;
the latter balls
are now in  ${\mathbb R}^3$. Then
\[
f_n(t,x,v)=0 \ \mbox{for}\ |v| \geq P_n(t)\ \mbox{or}\ |x| \geq
\mathring R + \int_{0}^t P_n(s)\, ds,
\]
\[
\rho_n(t,x)=0 \ \mbox{for}\ |x| \geq
\mathring R + \int_{0}^t P_n(s)\, ds,
\]
and
\[
\Vert \rho_n (t) \Vert_\infty
\leq \frac{4 \pi}{3}\Vert  \mathring{f}  \Vert_\infty
P_n(t) ^3.
\]
Now we recall that for any $\rho\in L^1\cap L^\infty({\mathbb R}^3)$
the field generated by $\rho$ satisfies the estimate
\begin{align}\label{Fest_gen}
\Vert F_\rho\Vert_\infty
\leq 3 (2\pi)^{2/3} \Vert\rho\Vert_1^{1/3}\Vert\rho\Vert_\infty^{2/3},
\end{align}
cf.\ for example \cite[Lemma~P1]{Rein07}.
Since $f_n\in C([0,\infty[;L^1({\mathbb R}^6))$
and hence $\rho_n\in C([0,\infty[;L^1({\mathbb R}^3))$,
\eqref{Fest_gen} implies that
$F_{n+1}$ is continuous in $t$. Moreover, for all $t\geq 0$,        
\begin{align} \label{Fn_bound}
  \Vert F_{n+1}(t) \Vert_\infty \leq 3 (2 \pi)^{2/3} \Vert\rho_n(t)\Vert_1^{1/3}
  \Vert\rho_n (t)\Vert_\infty ^{2/3}
\leq C_{\mathring{f}} P_n(t)^2, 
\end{align}
where
\begin{align} \label{cfdef}
  C_{\mathring{f}} :=
  4 \cdot 3^{1/3} \pi^{4/3} \Vert\mathring{f} \Vert_1^{1/3}
  \Vert\mathring{f} \Vert_\infty^{2/3},
\end{align}
in particular, the field $F_{n+1}$ is bounded, locally uniformly in $t$.
The spherical symmetry is inherited by $f_n$,
and hence by $\rho_n$ and $F_{n+1}$,
and to see that $F_{n+1}$ has the field properties
formulated above, it remains to show its Lipschitz property;
this is the first instance where we need to exploit the symmetry assumption.
Because of the latter,
\begin{align} \label{symmfield}
  F_{n+1}(t,x) = G_{n+1}(t,r) \frac{x}{r},
  \ \mbox{where}\
  G_{n+1}(t,r) := \frac{4\pi}{r^2} \int_0^r \rho_n (t,s)\, s^2 ds;
\end{align}
here $r=|x|$, and we identify $\rho_n(t,x)$ and $\rho_n(t,r)$.
For any $t\geq 0$ and $0<u<r$, 
\begin{align} \label{lipG}
  |G_{n+1}(t,r)- G_{n+1}(t,u)|
  \leq &\frac{4\pi}{r^2} \int_u^r \rho_n (t,s)\,s^2 ds\nonumber\\
  &{}+  4\pi \Big\vert\frac{1}{r^2} - \frac{1}{u^2}\Big\vert
  \int_0^u \rho_n (t,s)\,s^2 ds \nonumber \\
  \leq &\frac{20\pi}{3} \Vert\rho_n(t)\Vert_\infty  |r-u| . 
\end{align}
The required Lipschitz property of $F_{n+1}$ follows from \eqref{lipG}.

\noindent
{\em Step 2:} We establish bounds that are uniform in $n$. The definition of
$P_n$ and \eqref{Fn_bound} imply that for $n\in {\mathbb N}_0$ and $t\geq 0$,
\[
P_{n+1}(t) \leq \mathring P + \int_0^t \Vert F_{n+1}(s)\Vert_\infty ds
\leq \mathring P + C_{\mathring{f}} \int_0^t P_n(s)^2 ds.
\]
If we drop the subscripts of $P$ and replace $\leq$ by $=$, we obtain
an integral equation the unique, maximal solution of which is
\begin{align} \label{Qdef}
  Q \colon [0, (\mathring P C_{\mathring{f}})^{-1}[ \to [0, \infty[, \
  t \mapsto \frac{\mathring P}{1- \mathring P C_{\mathring{f}} t },
\end{align}
and a straight forward induction argument shows that
$P_n(t) \leq Q(t)$  for all $n\in {\mathbb N}_0$ and
$t\in  [0, \delta_0[$, where $\delta_0:=(\mathring P C_{\mathring{f}})^{-1}$.

\noindent
{\em Step 3:} We show that on any compact subinterval
$[0,\delta]\subset [0,\delta_0[$ the iteration sequence converges
in a suitable sense, and its limit is a strong Lagrangian solution.

Using the characteristic system and observing that in view of the uniform bounds
and \eqref{lipG} the fields $F_n$ are Lipschitz in $x$ uniformly on
$[0,\delta]$ and uniformly in $n$ we find that
\begin{align*}
  |Z_{n+1}(t,0,z)- Z_{n}(t,0,z)|
  \leq & C \int_0^t |Z_{n+1}(s,0,z) - Z_{n}(s,0,z)|\, ds\\
  & {}+ \int_0^t\Vert G_{n+1} (s) -  G_n(s) \Vert_\infty ds;
\end{align*}
here and in what follows $C$ denotes a positive constant
which may only depend on
$\mathring{f}$ and $\delta_0$ and which may change its value from line to line.
For the relativistic case we note that the map
$v\mapsto v/\sqrt{1+|v|^2}$ is Lipschitz continuous.
By Gronwall,
\begin{align} \label{DZn < DGn}
  |Z_{n+1}(t,0,z)- Z_{n}(t,0,z)| \leq C
  \int_0^t\Vert G_{n+1} (s) -  G_n(s) \Vert_\infty ds. 
\end{align}
The crucial point is to estimate the latter difference,
and this is also the point
where the symmetry assumption enters most strongly, cf.~\eqref{symmfield}.
For $t\in [0,\delta]$, $r\geq 0$ and $n\in {\mathbb N}$
we first note that by the uniform estimate on $\rho_n$,
\begin{align}
DG_n(t,r) &:= \vert G_{n+1} (t,r) - G_n(t,r) \vert\nonumber \\
&= \frac{4 \pi}{r^2}
\Big\vert \int_0^r \big(\rho_{n} (t,s) -\rho_{n-1} (t,s)\big)\, s^2 ds \Big\vert
\leq C\, r. \label{Force < Cr}
\end{align}
On the other hand, denoting $z=(y,v)\in {\mathbb R}^3\times {\mathbb R}^3$,
\begin{align*}
\{ Z_{n}(0,t,z) \mid |y| \leq r \} 
&= \{ \tilde z \in {\mathbb R}^6
\mid \exists \, z \in B_r \times {\mathbb R}^3\
\mbox{such that}\ Z_{n}(t,0,\tilde z) = z \}
\\
&= \{ \tilde z \in {\mathbb R}^6 \mid |X_n (t,0,\tilde z)| \leq r \}.
\end{align*}
Hence we can rewrite the modulus of the field as
\begin{align} \label{G_rewr}
  G_{n+1}(t,r) 
  &= \frac{1}{r^2} \int_{\vert y \vert \leq r} \rho_{n}(t,y)\, dy 
  = \frac{1}{r^2} \int_{\{z \in {\mathbb R}^6 \mid |y| \leq r\} }
  \mathring{f}(Z_{n}(0,t,z))\, dz
  \nonumber \\
  &=\frac{1}{r^2} \int_{Z_{n}(0,t,B_r\times{\mathbb R}^3)}  \mathring{f}(z)\, dz 
  = \frac{1}{r^2} \int_{\{ z \in {\mathbb R}^6 \mid |X_{n} (t,0, z)| \leq r\} }
  \mathring{f}(z)\, dz,	
\end{align}
where we have used the change-of-variables formula from
Lemma~\ref{lemma_char}~(c).
This implies that
\begin{align}\label{DForce < vol Dn}
DG_n(t,r) 
&\leq \frac{1}{r^2} \Big \vert \int_{\{ z \in {\mathbb R}^6 \mid |X_n (t,0,z)| \leq r\} }
\mathring{f} (z)\, dz
- \int_{\{ z \in {\mathbb R}^6 \mid |X_{n-1} (t,0, z)| \leq r\} }
\mathring{f} (z)\, dz \Big \vert
\nonumber \\
& \leq \frac{1}{r^2} \Vert \mathring{f} \Vert_\infty
\lambda (D_n), 
\end{align}
where $\lambda$ denotes the Lebesgue measure, and
\begin{align*}
  D_n :=  \Bigl\{ z \in \mathrm{supp}\, \mathring{f} \mid &\;
  |X_n(t,0,z)| \leq r < |X_{n-1} (t,0,z)| \\
& \; \lor \; |X_{n-1}(t,0,z)| \leq r < |X_{n} (t,0,z)|\Bigr\}.
\end{align*}
Defining
\begin{align*}
d_n := \sup_{z \,\in\, \mathrm{supp}\,   \mathring{f} } |X_n(t,0,z) - X_{n-1} (t,0,z)|,
\end{align*}
we observe that
\begin{align*}
\lambda (D_n)
&\leq
\lambda\Bigl(\Bigl\{z\in\mathrm{supp}\, \mathring{f} \mid
|X_n(t,0,z)| \leq r < d_n + |X_n(t,0,z)| \\
& \qquad \qquad \qquad \qquad
\lor |X_{n-1}(t,0,z)| \leq r < d_n + |X_{n-1}(t,0,z)| \Bigr\}\Bigr) \\
&\leq \,
\lambda\Bigl(\Bigl\{z\in\mathrm{supp}\,\mathring{f} \mid
|X_n(t,0,z)| \leq r < d_n + |X_n(t,0,z)|\Bigr\}\Bigr) \\
& \quad {}+
\lambda\Bigl(\Bigl\{z\in\mathrm{supp}\,\mathring{f} \mid
|X_{n-1}(t,0,z)| \leq r < d_n + |X_{n-1}(t,0,z)|\Bigr\}\Bigr) \\
& =: \lambda (D^1_n) +  \lambda (D^2_n).
\end{align*}
We use the fact that the characteristic flow is measure preserving
to eliminate the $X_n$-terms:
\begin{align*}
  \lambda (D^1_n)
  &= \lambda (Z_n(t,0,D_n^1)) \\
  & = \lambda\Bigl( \Bigl\{Z_n(t,0,z) \mid\!
  z \in \mathrm{supp}\,\!\mathring{f}  \land
  |X_n(t,0,z)| \leq r < d_n + |X_n(t,0,z)|  \Bigr\}\Bigr) \\
  & = \lambda\Bigl( \Bigl\{ (y,v) \in  Z_n(t,0,\mathrm{supp}\,\mathring{f})
  \mid
  |y| \leq r < d_n + |y| \Bigr\}\Bigr) \\
  & \leq \lambda \Big(  \{ y \in B_R \mid r - d_n < |y|\leq r  \}
  \times B_R \Big) \\
  & \leq C \, \big( r^3 -(r-d_n)_+^3 \big),
\end{align*}
where we recall that by our uniform estimates,
$Z_n(t,0,\mathrm{supp}\,\mathring{f})\subset B_R \times B_R$
with some radius $R>0$
which is uniform in $n\in {\mathbb N}_0$ and $t\in [0,\delta]$.
The same result holds for $D_n^2$;
we just have to replace $Z_{n}$ by $Z_{n-1}$. Hence
\begin{align}\label{vol Dn < dn...}
  \lambda (D_n) \leq  C \, \big( r^3 -(r-d_n)_+^3 \big).	
\end{align}
If $r \leq d_n$, we use \eqref{Force < Cr} to find that
\begin{align*}
  DG_n(t,r) \leq C\, r \leq C\, d_n .
\end{align*}
If $r>d_n$, \eqref{DForce < vol Dn} and \eqref{vol Dn < dn...} imply that
\begin{align*}
  DG_n(t,r) \leq \frac{C}{r^2} (d_n^3 + 3 r^2 d_n) \leq C\, d_n.
\end{align*}
Combining both results, we see that
\begin{align*}
\Vert G_{n+1} (t) - G_n (t) \Vert_\infty
&\leq C \sup_{z \, \in \, \mathrm{supp}\, \mathring{f}} |X_n(t,0,z) - X_{n-1}(t,0,z)| \\
& \leq C \sup_{z \, \in \,  \mathrm{supp}\, \mathring{f} } |Z_n(t,0,z) - Z_{n-1}(t,0,z) |,	
\end{align*}
and together with \eqref{DZn < DGn} we finally arrive at the estimate
\begin{align}\label{DFn_est}
\Vert F_{n+1} (t) -  F_n(t) \Vert_\infty
\leq C \int_0^t \Vert F_{n} (s) -  F_{n-1}(s) \Vert_\infty ds,
\end{align}
which holds for all $n\in {\mathbb N}$ and $t\in [0,\delta]$. This implies that
$(F_n)$ is a Cauchy sequence in the space
$C([0,\delta]; L^\infty({\mathbb R}^3))$.
Thus there exists a limiting field
$F\colon [0,\delta]\times {\mathbb R}^3 \to {\mathbb R}^3$ such that
$ F_n \to F $ uniformly on $[0,\delta] \times {\mathbb R}^3$.
The field is bounded
and continuous. By the previous two steps, $F_n$ is Lipschitz continuous in
$x$, uniformly in $t\in [0,\delta]$ and in $n\in {\mathbb N}$.
Hence $F$ is Lipschitz continuous in
$x$, uniformly in $t\in [0,\delta]$. Since $\delta < \delta_0$ is arbitrary,
the field $F$ exists and has the desired properties on $[0,\delta_0[$.
Lemma~\ref{lemma_char} yields the corresponding flow $Z$, and
$Z_n \to Z$ uniformly on
$[0,\delta] \times [0,\delta] \times {\mathbb R}^6$ for all
$\delta < \delta_0$. If we define $f(t,z) = \mathring{f}(Z(0,t,z))$ according to
Lemma~\ref{lemma_vlasov} it remains to show that $F$ is indeed the field
induced by $f$.

By Lebesgue's dominated convergence theorem and \eqref{G_rewr},
\begin{align*}
  G (t,r) 
  &= \lim_{n\to \infty} G_n(t,r) 
  = \frac{1}{r^2} \lim_{n\to \infty}
  \int_{\{ z \in {\mathbb R}^6 \mid |X_{n-1} (t,0, z)| \leq r\} }  \mathring{f}  (z)\, dz \\
  &=  \frac{1}{r^2} \int_{\{ z \in {\mathbb R}^6 \mid |X (t,0, z)| \leq r\} }
  \mathring{f}  (z)\, dz
  = \frac{1}{r^2}  \int_{\{z \in {\mathbb R}^6 \mid |y| \leq r\} }
  \mathring{f}  (Z(0,t,z))\, dz,
\end{align*}
which implies that
\begin{align*}
  F(t,x) =  G (t,r) \frac{x}{r}
  = \iint \frac{x-y}{|x-y|^3} f(t,y,v)\, dv \, dy.
\end{align*}
Hence $f$ has all the properties of a strong Lagrangian solution on
$[0,\delta[$, and of course $f(0)=\mathring{f}$.
\subsection{Uniqueness}
Assume that we have two spherically symmetric,
strong Lagrangian solutions to the same initial data
with fields $F$ and $\tilde F$. Then on any time interval $[0,\delta]$
where both are defined we can treat the difference $F-\tilde F$ exactly
as we treated $F_{n+1}-F_n$ above, in particular  $F-\tilde F$
must satisfy the analogue of \eqref{DFn_est} which implies that
the two fields and hence the two solutions are equal on $[0,\delta]$.
\begin{remark}
  The argument above yields uniqueness only within
  the class of spherically symmetric, strong Lagrangian solutions, which is
  sufficient for what follows below. However, uniqueness holds also within the
  class of (not necessarily symmetric)
  strong Lagrangian solutions for data $\mathring{f}$ which are measurable,
  bounded, and compactly supported. To see this, we observe that for such
  solutions the induced spatial density is again
  bounded and compactly supported with respect to $x$, locally uniformly in
  $t$. Moreover, such solutions are easily seen to be weak solutions, and the
  uniqueness results in \cite{Lo,Ro} apply, at least in the non-relativistic
  case; it seems reasonable to expect this to remain true also in the
  relativistic case.
\end{remark}
\subsection{Continuation and global existence}
We can extend the unique, strong Lagrangian solution to its maximal interval
of existence $[0,T[$. Assume that
\begin{align}\label{Past}
  P^\ast := \sup \big\{|v| \mid (x,v) \in \mathrm{supp}\, f(t),
  \ 0 \leq t <T \big\}
  < \infty .
\end{align}
For any $t_0\in [0,T[$,
\[
\Vert f(t_0) \Vert_\infty = \Vert  \mathring{f}  \Vert_\infty, \quad \Vert f(t_0)
\Vert_1 = \Vert  \mathring{f}  \Vert_1,
\] 
and hence
$C_{f(t_0)} = 4 \cdot 3^{1/3} \pi ^{4/3} \Vert f(t_0) \Vert_1^{1/3}
\Vert f(t_0) \Vert_\infty ^{2/3} = C_{\mathring{f}}$,
cf.\ \eqref{cfdef}.
We define  $\delta_0^\ast := (P^\ast C_{f(t_0)} )^{-1} = (P^\ast C_{\mathring{f}})^{-1}$.
Arguing exactly as before we obtain a strong Lagrangian solution
on the interval $[t_0,t_0+\delta_0^\ast[$ to the initial data $f(t_0)$
prescribed at $t_0$. By uniqueness, this solution must coincide with
$f$ as long as both exist. If $T$ were finite, this would extend the maximal
solution beyond $T$, provided we choose $t_0$ close enough to $T$;
note that $\delta_0^\ast$ is independent of $t_0$.

The continuation criterion which is now established applies to both the
non-relativistic and the relativistic case, and in the former we can
verify that \eqref{Past} indeed holds and hence $T=\infty$.
To this end we observe that the corresponding argument of Horst \cite{Ho}
applies to strong Lagrangian solutions, see also \cite[Thm.~1.4]{Rein07}.
\subsection{Conservation laws}
The conservation of Casimir functionals is a direct consequence of the
change-of-variables formula in Lemma~\ref{lemma_char}~(c) and the definition
of a strong Lagrangian solution.

Next we prove conservation of energy for the non-relativistic case,
the relativistic case being completely analogous.
We use the fact that the flow is measure preserving,
cf.~Lemma~\ref{lemma_char}~(b),
together with the fact that $f$ is constant along
the flow and the fundamental theorem of
calculus, and we recall the notation
$z=(x,v)\in {\mathbb R}^3\times{\mathbb R}^3$
and analogously,
$\tilde z=(\tilde x,\tilde v)\in {\mathbb R}^3\times{\mathbb R}^3$. Then 
\begin{align*}
  &
  2 E_{\mathrm{kin}}(f(t))  + 2 E_{\mathrm{pot}}(f(t))\\
  &= \int |v|^2 f(t,z)\, dz -
  \iint \frac{f(t,z)\,f(t,\tilde{z})}{|x - \tilde{x}|} dz \, d\tilde{z} \\
  &= \int |V(t,0,z)|^2 f(t, Z(t,0,z))\, dz 
  - \iint \frac{f(t, Z(t,0,z))\,f(t, Z(t,0,\tilde{z}))}
  {|X(t,0,z) -X(t,0,\tilde z)|} dz \, d\tilde{z} \\
  &= \iint_0^t \frac{d}{ds}\Big(|V(s,0,z)|^2
  f(s,Z(s,0,z)) \Big) \, ds \, dz
  + \int \vert v \vert^2  \mathring{f} (z)\, dz \\
  &\quad {}- \iiint_0^t
  \frac{d}{ds}
  \Bigg(\frac{f(s, Z(s,0,z))\,f(s, Z(s,0,\tilde z))}
       {|X(s,0,z)-X(s,0,\tilde z)|} \Bigg)\,ds\, dz\, d\tilde{z} \\
  &\quad {}
  -  \iint
  \frac{\mathring{f} (z)\, \mathring{f} (\tilde z)}{|x - \tilde{x}|}
  dz\, d\tilde z \\
  &=  2 E_{\mathrm{kin}}(\mathring{f}) + 2 E_{\mathrm{pot}}(\mathring{f}) \\
  &\quad {} - 2\iint_0^t V(s,0,z) \cdot F(s,X(s,0,z))\,
  f(s, Z(s,0,z))\, ds\, dz \\
  &\quad
  {}+ \iiint_0^t
  \frac{X(s,0,z)-X(s,0,\tilde z)}{|X(s,0,z)-X(s,0,\tilde z)|^3}
  \cdot \big(V(s,0,z)-V(s,0,\tilde z)\big)\,\\
  & \qquad\qquad\qquad\qquad\qquad\qquad\qquad
  f(s, Z(s,0,z))\, f(s,Z(s,0,\tilde z))\, ds\, dz\, d\tilde z;
\end{align*}
in the last step we used that $f$ is a strong Lagrangian solution so that 
\begin{align*}
  \frac{d}{ds} f(s,Z(s,0,z)) =0, \quad s \in [0,T[.
\end{align*}
Using Fubini's theorem and reversing the change of variables via
$z \mapsto Z(0,s,z)$ and $\tilde z \mapsto Z(0,s,\tilde z) $,  we obtain
\begin{align}\label{edot}
  E_{\mathrm{kin}}(f(t))  + E_{\mathrm{pot}}(f(t))
  =&  E_{\mathrm{kin}}(\mathring{f}) + E_{\mathrm{pot}}(\mathring{f})
  - \int_0^t \int v \cdot F(s,x)\, f(s,z)\, dz\, ds \nonumber\\
  &{}+
  \frac{1}{2}\int_0^t \!\iint
  (v-\tilde v)\!\cdot\!\frac{x-\tilde x}{|x-\tilde x|^3}
  f(s,\tilde z)\, f(s,z)\, d\tilde z\, dz\, ds.
\end{align}
Since $f$ is a strong Lagrangian solution,
\begin{align*}
  F(s,x)= \int \frac{x-\tilde x}{|x-\tilde x|^3} f(s,\tilde z)\, d\tilde z
\end{align*}
for all $(s,x) \in [0,T[ \times {\mathbb R}^3$ so that the two integrals
in \eqref{edot} cancel,
and the proof of the conservation laws and of Theorem~\ref{main} is complete.
\subsection{Further solution properties and remarks}
\begin{itemize}
\item[(a)]
  The above proof shows that the conservation laws hold for any strong
  Lagrangian solution; the symmetry assumption on the data did not enter
  in that argument.
\item[(b)]
  In none of the preceding arguments did we use the attractive nature
  of the force field so that our results hold for the plasma physics
  case as well,
  where the sign in the right hand side of \eqref{poisson} is reversed.
  In the relativistic, plasma physics case the estimates in \cite{GS},
  which equally well apply to strong Lagrangian solutions,
  imply that $T=\infty$,
  i.e., the solutions are global (for spherically symmetric data).
\item[(c)]
  The proof of Theorem~\ref{main} implies that for all $t\in [0,T[$
  the functions
  $f(t)$ and $\rho(t)$ are bounded, measurable functions with compact support,
  and the control on the support is locally uniform in $t$.
  In the non-relativistic
  case the bound on the velocity support of $f(t)$ is globally uniform in $t$,
  which follows from the estimates in Horst \cite{Ho},
  cf.\ \cite[Thm.~1.4]{Rein07}.
\item[(d)]
  Our existence proof is completely constructive and does not
  rely on compactness
  arguments, which typically are used for obtaining weak solutions,
  and it covers
  both the non-relativistic and the relativistic case.
  The price to pay for this
  is the unwanted symmetry assumption on the initial data.
\end{itemize}

\end{document}